\newtheorem{theorem}{Theorem}
\newtheorem{lemma}[theorem]{Lemma}
\newtheorem{proposition}[theorem]{Proposition}
\newtheorem{remark}{Remark}
\DeclareMathOperator{\argmin}{argmin}
\newcommand{\R}{\mathbb R}
\newcommand{\bs}{\left(\begin{smallmatrix}}
\newcommand{\es}{\end{smallmatrix}\right)}
\begin{document}
\title{Reducing the projection onto the monotone extended second-order cone to the pool-adjacent-violators algorithm of isotonic regression}
\author{O. P. Ferreira \thanks{IME, Universidade Federal de Goias, Avenida Esperança, s/n, Campus Samambaia,  Goi\^ania, GO, 74690-900, Brazil {\tt orizon@ufg.br}.  The authors was supported in part by  CNPq grants 305158/2014-7 and 302473/2017-3.}
\and
Y. Gao  \thanks{School of Mathematics, University of Birmingham, Watson Building, Edgbaston, Birmingham B15 2TT, United Kingdom,  {\tt YXG713@bham.ac.uk}, {\tt s.nemeth@bham.ac.uk}.} 
\and 
S. Z. N\'emeth   \footnotemark[2]
 }
 
\maketitle

\begin{abstract}
This paper introduces the monotone extended second-order cone (MESOC), which is related to 
the monotone cone and the second-order cone. Some properties of the MESOC are presented and
its dual cone is computed. Projecting onto the MESOC is reduced to the pool-adjacent-violators
algorithm (PAVA) of isotonic regression. An application of MESOC to portfolio optimisation is
provided. Some broad descriptions of possible MESOC-regression models are also outlined. \\

\noindent
{\bf Keywords:} { Extended second order cone, isotonic regression,  dual cone,  metric projection.}
\end{abstract}

\section{Introduction} \
The purpose of this paper is to introduce a new second-order cone, which we call the  monotone extended second-order cone. Some properties
of the MESOC  are studied and formulas for projecting onto  it are presented. We will follow the ideas used in \cite{FerreiraNemeth2018} for
projecting onto a non-monotone extension of the second-order cone. It is worth to note that the projection in this paper is considerably more
difficult to find, because it is partly based on projecting onto the monotone nonnegative cone, which is a nontrivial problem compared to the
projection onto the nonnegative orthant, see \cite{mair2009isotone2009,nemeth2012projec}. The definition of the MESOC relates two well-known
cones, namely, the monotone cone and a second-order cone, known  as  Lorentz  cone.    The monotone cone  has connections  with  the isotonic regression problem, in fact it is the constraint  set of this problem,  see for example \cite{BestChakravarti1990}. This cone arises in statistics and has also connections with finance \cite{le2016application}.  In  \cite{2Nemeth2016}  some   properties of the weighted version of the monotone cone have been also considered. The Lorentz cone is an important object in theoretical physics, and it is commonly used in optimization, a good survey paper with a wide range of applications of second-order cone programming is \cite{MR1655138}. Various  connections of second-order cone programming and second-order cone complementarity problem with physics, mechanics, economics, game theory, robotics, optimization and neural networks have been considered in
\cite{MR3158056,MR2377196,MR2116450,MR3010551,MR2925039,MR2568432,MR2522815,KCY2011,MR2179239}. 

The structure of the paper is as follows: In Section~\ref{sec:preliminaries} we fix the notations and the terminology used throughout the paper.
In Section~\ref{sec:defdual}  we introduce  the MESOC and  compute its  dual cone, and in  Section~\ref{sec:compl}  we find   the complementarity
set of the MESOC.  The  formulas for projecting onto the pair of mutually dual monotone extended second-order cones are derived in Section~\ref{sec:proj}. 
In Section~\ref{eq:apo} we have presented an application of the MESOC to portfolio 
optimisation via a conic optimization problem related to the mean-absolute deviation model 
\cite{konno1991mean}.     
Finally, we make some remarks in the last section, including some broad descriptions about 
how could the projection onto the MESOC occur directly in modelling some practical 
problems.   
\section{Preliminaries} \label{sec:preliminaries}
Here, we recall some notations, definitions, and basic properties of convex cones and projections onto it.  Let $\ell,m,p,q$ be positive integers such that $m=p+q$. We identify the  vectors of ${\mathbb R}^\ell$ with $\ell\times 1$ matrices with real
entries. The scalar product in ${\mathbb R}^\ell$ and the  corresponding norm  are  defined, respectively, by
${\mathbb R}^\ell\times{\mathbb R}^\ell\ni (x,y)\mapsto\langle x,y \rangle:=x^\top y\in{\mathbb R}$ and  $ {\mathbb R}^\ell\ni x\mapsto\|x\|:=\sqrt{\langle x,x\rangle}\in{\mathbb R}$. 
The equality $\langle x,y\rangle=0$ is denoted by $x\perp y$.
We identify the elements of ${\mathbb R}^p\times{\mathbb R}^q$ with the elements of ${\mathbb R}^m$ through the correspondence ${\mathbb R}^p\times{\mathbb R}^q\ni(x,y)\mapsto (x^\top,y^\top)^\top$. Through this
identification the scalar product in ${\mathbb R}^p\times{\mathbb R}^q$ is defined by $\langle (x,y),(u,v)\rangle:=\langle x,u\rangle+\langle
y,v\rangle$.  A closed set ${\cal K}\subseteq{\mathbb R}^\ell$ with nonempty interior is
called a \emph{proper cone} if ${\cal K}+{\cal K}\subseteq {\cal K}$, ${\cal K}\cap(-{\cal K})=\{0\}$ and $\lambda {\cal K}\subseteq {\cal K}$, for any $\lambda$ positive real number. The \emph{dual
cone} of a proper cone $K\subseteq{\mathbb R}^\ell$ is a proper cone defined by ${\cal K}^*:=\{x\in{\mathbb R}^\ell~:~\langle x,y\rangle\ge0,\mbox{ }\forall y\in {\cal K}\}$.  For a proper cone ${\cal K}\in{\mathbb R}^\ell$,  the \emph{complementarity set} of ${\cal K}$ is defined  by
$C({\cal K}):=\left\{(x,y)\in {\cal K}\times {\cal K}^*:~x\perp y\right\}$. 
 Let $C\in{\mathbb R}^\ell$ be a closed convex set. The projection mapping $P_C\colon{\mathbb R}^\ell\to{\mathbb R}^\ell$ onto $C$ is defined by $P_C(x):=\argmin\{\|x-y\|:y\in C\}$, which is piecewise linear whenever $C$ is a polyhedral cone; see \cite[Definition 4.1.3 and Proposition~4.1.4]{FacchineiPang2003-I}.  We recall here Moreau's decomposition theorem  \cite{MR0139919} (stated here for proper cones only):
\begin{theorem} \label{th:mt}
	Let ${\cal K}\subseteq{\mathbb R}^\ell$ be a proper cone, ${\cal K}^*$ its dual cone and $z\in{\mathbb R}^\ell$. Then, the following two statements are equivalent:
	\begin{enumerate}
		\item[(i)] $z=x-y$ and $(x,y)\in C({\cal K})$,
		\item[$(ii)$] $x=P_{\cal K}(z)$ and $y=P_{{\cal K}^*}(-z)$.
	\end{enumerate}
\end{theorem}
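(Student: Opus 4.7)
The plan is to prove both implications using the variational characterization of the projection onto a closed convex cone: for a closed convex cone $\mathcal C\subseteq\mathbb R^\ell$ and $z\in\mathbb R^\ell$, the point $x=P_{\mathcal C}(z)$ is equivalently described by $x\in\mathcal C$, $x-z\in\mathcal C^*$, and $\langle x,z-x\rangle=0$. This follows from the standard KKT conditions for the projection problem: the optimality inequality $\langle z-x,w-x\rangle\le 0$ for all $w\in\mathcal C$, combined with the test choices $w=0$ and $w=2x$ (both in $\mathcal C$ since $\mathcal C$ is a cone), gives $\langle x,z-x\rangle=0$ and then $\langle z-x,w\rangle\le 0$ for all $w\in\mathcal C$, i.e.\ $x-z\in\mathcal C^*$. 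Existence and uniqueness of the projection onto a closed convex set in $\mathbb R^\ell$ are automatic, so I do not need to worry about that.

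For the direction $(i)\Rightarrow(ii)$, starting from a decomposition $z=x-y$ with $(x,y)\in C(\mathcal K)$, I would check directly that $x$ minimises $\|\cdot-z\|^2$ over $\mathcal K$. Expanding, for any $w\in\mathcal K$,
\[
\|w-z\|^2-\|x-z\|^2=\|w-x\|^2+2\langle w-x,\,x-z\rangle=\|w-x\|^2+2\langle w,y\rangle,
\]
since $x-z=y$ and $\langle x,y\rangle=0$. Both terms are nonnegative, the first trivially and the second because $w\in\mathcal K$ and $y\in\mathcal K^*$. Hence $x=P_{\mathcal K}(z)$. A symmetric computation for the decomposition $-z=y-x$ over $\mathcal K^*$, now using $x\in\mathcal K\subseteq(\mathcal K^*)^*$, yields $y=P_{\mathcal K^*}(-z)$.

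For $(ii)\Rightarrow(i)$, I apply the cone projection characterization to $x=P_{\mathcal K}(z)$ to obtain $x\in\mathcal K$, $y':=x-z\in\mathcal K^*$, and $\langle x,y'\rangle=0$, which says exactly $(x,y')\in C(\mathcal K)$ with $z=x-y'$. The first direction, already proved, then gives $y'=P_{\mathcal K^*}(-z)$, and uniqueness of the projection forces $y=y'$, closing the loop. The only subtle point in the whole argument is justifying the variational characterization of the cone projection; once that is on the table, each implication is a short algebraic manipulation using only the defining properties of $\mathcal K^*$ and the symmetry $(\mathcal K^*)^*=\mathcal K$ for closed convex cones.
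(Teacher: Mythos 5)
Your argument is correct, but there is nothing in the paper to compare it against: Theorem~\ref{th:mt} is stated as a recalled classical result (Moreau's decomposition theorem, cited to Moreau's 1962 note \cite{MR0139919}) and the paper gives no proof of it. What you have written is the standard self-contained proof, and it is sound with the paper's sign convention ${\cal K}^*=\{x:\langle x,y\rangle\ge 0,\ \forall y\in{\cal K}\}$: the test vectors $w=0$ and $w=2x$ do yield $\langle x,z-x\rangle=0$ and $x-z\in{\cal K}^*$, the expansion $\|w-z\|^2-\|x-z\|^2=\|w-x\|^2+2\langle w,y\rangle$ is right, and for the second projection you only need the easy inclusion ${\cal K}\subseteq({\cal K}^*)^*$ (which follows directly from the definition of the dual cone), not the full bipolar theorem. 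Two minor points worth making explicit if you write this up: first, the identification $x=P_{\cal K}(z)$ from the inequality $\|w-z\|^2\ge\|x-z\|^2$ uses uniqueness of the minimiser, which your expansion already gives since equality forces $\|w-x\|^2=0$; second, in the direction $(ii)\Rightarrow(i)$ you could equally well apply the variational characterisation to $y=P_{{\cal K}^*}(-z)$ directly rather than routing through the first implication, though your loop-closing argument via uniqueness is perfectly valid.
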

In particular,  Theorem~\ref{th:mt} implies  that
$$
 P_{\cal K}(z)\perp P_{{\cal K}^*}(-z), \qquad  z=P_{\cal K}(z)-P_{{\cal K}^*}(-z).
$$
For  $z\in{\mathbb R}^\ell$ we denote  $z=(z_1,\dots,z_\ell)^\top$.  Denote by  ${\mathbb R}^\ell_+=\{x\in{\mathbb R}^\ell~:~ x\ge0\}$ the nonnegative orthant. The proper cone ${\mathbb R}^\ell_+$ is self-dual, i.e., ${\mathbb R}^\ell_+=({\mathbb R}^\ell_+)^*$. For a real number $\alpha\in{\mathbb R}$ denote $\alpha^+:=\max(\alpha,0)$ and $\alpha^-:=\max(-\alpha,0)$. 
For a vector $z\in{\mathbb R}^\ell$ denote $z^+:=(z_1^+,\dots,z_\ell^+)^\top$,
$z^-:=(z_1^-,\dots,z_\ell^-)^\top$ and  $|z|:=(|z_1|,\dots,|z_\ell|)^\top$.
Therefore, $z^+=P_{{\mathbb R}^\ell_+}(z)$,  $z^-=P_{{\mathbb R}^\ell_+}(-z)$,
$z=z^+-z^-$ and $|z|=z^++z^-$. In particular, we denote $P_{\cal K}(z)^+=x^+$ and
$P_{\cal K}(z)^-=x^-$, where ${\cal K}\subseteq\mathbb R^{\ell}$ is a proper cone
and $x=P_{\cal K}(z)$. Thus, $P_{\cal K}(z)=P_{\cal K}(z)^+-P_{\cal K}(z)^-$. Without 
leading to any confusion,
depending on the context, we will denote by $0$ the vector in ${\mathbb R}^\ell$ or a scalar zero and by  $e^i\in \mathbb{R}^p$  the $i$-th
canonical unit vector, i.e., the vector with all coordinates $0$ except the $i$-th coordinate which is $1$. The  {\it monotone  cone} $ \R^p_{\geq}$ is defined as follows:
\begin{equation} \label{eq:defmca}
 \R^p_{\geq}:=\left\{x\in\mathbb{R}^p:~x_1\geq x_2\geq\cdots\geq x_p\right\}.
\end{equation}
Let $j\in\{1,...,p-1\}$. To simplify the notations we define
$$
e^{1:j}:=e^1+\cdots+e^j=(\underbrace{1, \ldots, 1}_{j\rm\ times},\underbrace{0, \ldots, 0}_{p-j\rm\ times})\in \R^p, \qquad  e:=e^1+\cdots+e^p=(\underbrace{1,\ldots,1}_{p\rm\ times}) \in \R^p.
$$
The {\it dual} of the cone $ \R^p_{\geq}$  is  given by 
\begin{equation} \label{eq:defdmca}
(\R^p_{\geq})^*:=\left\{y\in\mathbb{R}^p:~ \left\langle y, e^{1:j}\right\rangle \geq 0, ~ j=1,\ldots,p-1,~ \left\langle y, e\right\rangle=0\right\}.
\end{equation}
The {\it monotone nonnegative cone}, is defined by
\begin{equation} \label{eq:defmnc}
 \R^p_{\geq+}:=\left\{x\in\mathbb{R}^p:~x_1\geq x_2\geq\cdots\geq x_p\geq 0\right\}.
\end{equation}
The {\it dual} of  the 
cone  $\R^p_{\geq+}$ is  given by 
\begin{equation} \label{eq:defdmnc}
(\R^p_{\geq+})^*:=\left\{y\in\mathbb{R}^p:~ \left\langle y, e^{1:j}\right\rangle \geq 0, ~ j=1,\ldots,p-1,~ \left\langle y, e\right\rangle\geq 0\right\}.
\end{equation}
\section{The monotone  extended second-order cone} \label{sec:defdual}
In this section we introduce  the  monotone  extended second-order cone, which 
generalize the well known  Lorentz cone.  We also compute the dual cone of the monotone  extended second-order cone.  The {\it monotone extended
second-order cone} $ {\mathcal L}_{p,q}\subseteq \mathbb{R}^m:=\mathbb{R}^{p+q}$  is  defined as follows:
\begin{equation} \label{defmesoc}
{\mathcal L}_{p,q}:=\big\{(x,u)\in\mathbb{R}^p\times\mathbb{R}^q:~x_1\geq x_2\geq\cdots\geq x_p\geq\|u\|\big\}.
\end{equation}
\begin{remark} \label{re:Lorentz}
If $p,q\ge 1$, then the cone ${\mathcal L}_{p,q}$ is a proper cone. Letting  $p=1$ in \eqref{defmesoc}, the cone ${\mathcal L}_{p,q}$  becomes 
${\mathcal L}_{1,p}=\{(t,u)\in\mathbb R\times\mathbb R^q:t\ge\|u\|\}$, which  is the  second-order cone in $\mathbb R^{1+q}\equiv\mathbb
R\times\mathbb R^q$  known as Lorentz cone.  The cone ${\mathcal L}_{p,q}$ is polyhedral, if and only if $q=0$ or $q=1$. If $q=0$, then the cone
${\mathcal L}_{p,q}$ becomes the monotone nonnegative cone $\mathbb R^p_{\ge+}$ defined in 
\eqref{eq:defmnc}.
\end{remark}
Before proceeding with our presentation, let us state {\it Abel's partial summation formula} 
that  will be useful to study the properties of the MESOC:
\begin{equation} \label{eq:fe}
  \langle x,y\rangle= \sum_{i=1}^{p-1}(x_i-x_{i+1})\sum_{j=1}^{i}y_j+x_p\sum_{i=1}^{p}y_i, \qquad \forall x, y\in \mathbb{R}^p.
\end{equation}
Interesting  applications of this formula  can be found in \cite{Niculescu2017, Niculescu2018}.  Next  we present the  dual cone of the MESOC.  
\begin{proposition}
The dual cone ${\mathcal L}_{p,q}^*$  of the monotone extended second-order cone ${\mathcal L}_{p,q}$ is
\begin{equation} \label{dualmesoc}
{\mathcal L}_{p,q}^*:=\left\{(y,v)\in\mathbb{R}^p\times\mathbb{R}^q:~ \left\langle y, e^{1:j}\right\rangle \geq 0, ~ j=1,\ldots,p-1,~ \left\langle y, e\right\rangle\geq\|v\|\right\}.
\end{equation}
\end{proposition}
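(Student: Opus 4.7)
The plan is to prove the two inclusions $\mathcal{L}_{p,q}^* \supseteq \{\cdot\}$ and $\mathcal{L}_{p,q}^* \subseteq \{\cdot\}$ separately, using Abel's partial summation formula \eqref{eq:fe} and Cauchy--Schwarz in the first direction, and well-chosen test vectors from $\mathcal{L}_{p,q}$ in the second.

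For the inclusion $\supseteq$, I would take an arbitrary $(y,v)$ satisfying $\langle y,e^{1:j}\rangle\ge 0$ for $j=1,\ldots,p-1$ and $\langle y,e\rangle\ge \|v\|$, and verify that $\langle (y,v),(x,u)\rangle\ge 0$ for every $(x,u)\in\mathcal{L}_{p,q}$. Expanding the pairing as $\langle y,x\rangle+\langle v,u\rangle$, I would apply \eqref{eq:fe} to rewrite
\[
\langle y,x\rangle=\sum_{i=1}^{p-1}(x_i-x_{i+1})\langle y,e^{1:i}\rangle+x_p\langle y,e\rangle.
\]
The first sum is nonnegative because both factors of each term are nonnegative (monotonicity of $x$ on the one hand, the hypothesis on $y$ on the other). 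For the remaining piece, I would use $x_p\ge\|u\|\ge 0$ and $\langle y,e\rangle\ge\|v\|\ge 0$ to get $x_p\langle y,e\rangle\ge\|u\|\,\|v\|$, and then Cauchy--Schwarz gives $\|u\|\,\|v\|+\langle v,u\rangle\ge 0$, which closes the argument.

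For the inclusion $\subseteq$, I would assume $(y,v)\in\mathcal{L}_{p,q}^*$ and extract each defining inequality of the RHS by testing against a specific element of $\mathcal{L}_{p,q}$. To get $\langle y,e^{1:j}\rangle\ge 0$, the natural test vector is $(e^{1:j},0)$, which lies in $\mathcal{L}_{p,q}$ since $e^{1:j}$ is weakly decreasing with last entry $0=\|0\|$. To get $\langle y,e\rangle\ge\|v\|$, if $v=0$ I would test with $(e,0)$; if $v\ne 0$, I would set $u=-v/\|v\|$ and $x=e$, so that $(x,u)\in\mathcal{L}_{p,q}$ (since $x_p=1=\|u\|$), and the duality inequality $\langle y,e\rangle-\|v\|\ge 0$ falls out immediately.

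I do not expect a serious obstacle: the only mildly delicate point is handling the scalar factor $x_p$ together with the vector $u$ in the Cauchy--Schwarz step, which is why one needs the \emph{nonnegativity} of $\langle y,e\rangle$ (already forced by the hypothesis $\langle y,e\rangle\ge\|v\|\ge 0$) to combine it safely with $x_p\ge\|u\|$. Everything else is a direct application of Abel's formula and the right choice of test vectors.
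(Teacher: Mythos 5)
Your proposal is correct and follows essentially the same route as the paper: Abel's partial summation formula plus Cauchy--Schwarz for one inclusion, and the test vectors $(e^{1:j},0)$, $(e,0)$, and (up to the harmless normalization $(e,-v/\|v\|)$ versus the paper's $(\|v\|e,-v)$) the same element for the case $v\neq 0$ in the other.
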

\begin{proof}
To simplify the notations, denote by $M$ the right hand side of \eqref{dualmesoc}. Our task is to prove that $M={\mathcal L}_{p,q}^*$, this will be done by proving that $M\subseteq {\mathcal L}_{p,q}^*$ and ${\mathcal L}_{p,q}^*\subseteq M$. We proceed to prove the first inclusion, for that   take $(y,v)\in M$.  The  definition of $M$ implies
\begin{equation} \label{eq:fsctop}
\left\langle y, e^{1:i}\right\rangle=\sum_{j=1}^{i}y_j\geq 0, \quad i=1, \ldots, p-1,  \quad   \qquad \left\langle y, e\right\rangle=\sum_{i=1}^{p}y_i\geq\|v\|.
\end{equation}
Let  $(x,u)\in {\mathcal L}_{p,q}$ be arbitrary.  The definition of ${\mathcal L}_{p,q}$ implies   $x_1-x_{2}\geq 0, \ldots, x_{p-1}-x_{p}\geq 0$,   and $x_p\geq \|u\|$, which together  with \eqref{eq:fe} and \eqref{eq:fsctop} yield
$$
  \langle x,y\rangle=\sum_{i=1}^{p-1}(x_i-x_{i+1})\sum_{j=1}^{i}y_j+x_p\sum_{i=1}^{p}y_i\geq  \|u\|\|v\|.
$$
Therefore,  the last inequality  and   Cauchy's inequality  imply
$$
\langle (x,u),(y,v)\rangle=\langle x,y\rangle+ \langle u,v\rangle \geq\|u\|\|v\|+ \langle u,v\rangle\geq 0, 
$$
which proves  the inclusion  $M\subseteq {\mathcal L}_{p,q}^*$.  To prove the second inclusion,  take  $(y,v)\in {\mathcal L}_{p,q}^*$.  First
note  that  $\left(e^{1:j}, 0\right)\in {\mathcal L}_{p,q}$. Thus, since $(y,v)\in {\mathcal L}_{p,q}^*$,  we have  $\left\langle\left (e^{1:j}, 0\right), (y,v)\right\rangle \geq 0$, for all $j=1,2,\ldots,p-1$,  which implies
\begin{equation} \label{eq:fe0t1}
\left\langle y, e^{1:j}\right\rangle\geq 0, \qquad  \forall ~j=1,2,\ldots,p-1.
\end{equation}
To proceed, first assume  $v=0$. Since $(e,0)\in {\mathcal L}_{p,q}$ and $ (y,0)\in {\mathcal L}_{p,q}^*$,  we have
\begin{equation} \label{eq:fe0}
\left\langle y, e\right\rangle \geq 0=\|v\|.
\end{equation}
Now,   assume $v\neq 0$. Since  $(\|v\|e,-v) \in {\mathcal L}_{p,q}$ and $(y,v)\in {\mathcal L}_{p,q}^*$, we obtain that  $\langle (\|v\|e,-v),
(y,v)\rangle \geq 0$, which implies    $\|v\|\left\langle y, e\right\rangle-\|v\|^2 \geq 0$. Thus, due  to $v\neq 0$,  we have $\left\langle y,
e\right\rangle-\|v\|\geq 0$. Therefore, the last inequality together with \eqref{eq:fe0} 
imply that
\begin{equation} \label{eq:fe0s1}
\left\langle y, e\right\rangle\geq \|v\|,
\end{equation}
for all $(y,v)\in {\mathcal L}_{p,q}^*$.   Hence,  it follows from   \eqref{eq:fe0t1} and  \eqref{eq:fe0s1}  that $(y,v)\in M$. Therefore, we conclude that  ${\mathcal L}_{p,q}^*\subseteq M$.
Since $M\subseteq {\mathcal L}_{p,q}^*$ and ${\mathcal L}_{p,q}^*\subseteq M$, we have ${\mathcal L}_{p,q}^* = M$.
\end{proof}
\begin{remark}
Letting  $p=1$ in \eqref{dualmesoc}, there are no  inequalities,  for $j=1,\ldots ,p-1$, because  $p-1=0$. Thus, the  cone ${\mathcal L}_{p,q}^*$
becomes the  Lorentz cone ${\mathcal L}_{1,p}$ (see also Remark~\ref{re:Lorentz}).
\end{remark}

\section{The complementarity set} \label{sec:compl}
After finding the dual of the monotone extended second-order cone, we want to find the complementarity set of this cone. In order to find the complementarity set, we need  two    inequalities    introduced in the next lemma.
\begin{lemma}\label{Le:p3}
Let  $(x,u) \in {\mathcal L}_{p,q}$ and $(y,v) \in {\mathcal L}_{p,q}^*$. Then,
\begin{equation} \label{eq:cier}
\langle x,y \rangle\geq\|u\|\left\langle y, e\right\rangle\geq\|u\|\|v\|.
\end{equation}
\end{lemma}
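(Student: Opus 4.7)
The plan is to derive both inequalities in \eqref{eq:cier} by applying Abel's partial summation formula \eqref{eq:fe} to $\langle x, y\rangle$ and then using, term by term, the defining inequalities of $\mathcal{L}_{p,q}$ and $\mathcal{L}_{p,q}^*$.

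For the first inequality, I would begin by writing
$$\langle x,y\rangle = \sum_{i=1}^{p-1}(x_i-x_{i+1})\sum_{j=1}^{i} y_j + x_p\sum_{i=1}^{p} y_i.$$
Since $(x,u)\in \mathcal{L}_{p,q}$ gives $x_i - x_{i+1}\ge 0$ for $i=1,\dots,p-1$, while $(y,v)\in \mathcal{L}_{p,q}^*$ gives $\langle y, e^{1:i}\rangle = \sum_{j=1}^{i} y_j \ge 0$ for $i=1,\dots,p-1$, every term in the first sum is nonnegative. Also $x_p \ge \|u\|$ and $\sum_{i=1}^{p} y_i = \langle y, e\rangle \ge \|v\| \ge 0$, so I may bound $x_p\sum_{i=1}^{p} y_i \ge \|u\|\sum_{i=1}^{p} y_i$. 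Dropping the nonnegative first sum then yields
$$\langle x,y\rangle \ge \|u\|\sum_{i=1}^{p} y_i = \|u\|\langle y, e\rangle,$$
which is the first inequality of \eqref{eq:cier}.

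The second inequality is immediate: from $(y,v)\in \mathcal{L}_{p,q}^*$ we have $\langle y, e\rangle \ge \|v\|$, and multiplying by the nonnegative scalar $\|u\|$ gives $\|u\|\langle y,e\rangle \ge \|u\|\|v\|$.

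I do not expect any real obstacle; the statement is essentially a bookkeeping exercise combining Abel's identity with the two cones' defining inequalities, and the only thing to be careful about is ensuring nonnegativity before multiplying inequalities (which is guaranteed since $\|u\|\ge 0$ and $\langle y,e\rangle\ge\|v\|\ge 0$).
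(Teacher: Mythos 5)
Your proof is correct, but it follows a different route from the paper's. The paper's argument for the first inequality in \eqref{eq:cier} avoids any summation: it observes that $x_1\geq\cdots\geq x_p\geq\|u\|$ implies $(x-\|u\|e,0)\in{\mathcal L}_{p,q}$, and then invokes the \emph{abstract} definition of the dual cone to get $0\leq\langle(x-\|u\|e,0),(y,v)\rangle=\langle x,y\rangle-\|u\|\langle y,e\rangle$; the second inequality is obtained exactly as you do, from $\langle y,e\rangle\geq\|v\|$ and $\|u\|\geq 0$. You instead expand $\langle x,y\rangle$ via Abel's formula \eqref{eq:fe} and bound it term by term using the explicit description \eqref{dualmesoc} of ${\mathcal L}_{p,q}^*$ (namely $\langle y,e^{1:i}\rangle\geq 0$ and $\langle y,e\rangle\geq\|v\|\geq 0$). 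Your computation is essentially a re-run of the inclusion $M\subseteq{\mathcal L}_{p,q}^*$ from the proof of the dual-cone proposition, specialised to the point $(x,0)$ shifted by $\|u\|e$; the paper's version simply reuses that already-established fact through the dual pairing, which makes the lemma a two-line corollary. Your version is more self-contained and makes the nonnegativity of each term visible, at the cost of repeating the Abel bookkeeping; all the sign checks you perform ($x_i-x_{i+1}\geq 0$, $\langle y,e^{1:i}\rangle\geq 0$, $\langle y,e\rangle\geq 0$ before multiplying by $x_p\geq\|u\|$) are handled correctly.
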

\begin{proof}
Since $(x,u) \in {\mathcal L}_{p,q}$,  we have $x_1\geq x_2\geq\cdots\geq x_p\geq\|u\|$. Thus,  letting  $ 0\in \mathbb{R}^q$, we have $ (x-\|u\|e, 0)\in {\mathcal L}_{p,q}$. Considering that $(y,v) \in {\mathcal L}_{p,q}^*$, the definition of ${\mathcal L}_{p,q}^*$ yields
$$
0\leq \left\langle (x-\|u\|e, 0),(y,v)\right\rangle=\langle x, y\rangle-\|u\|\langle y, e\rangle.
$$
which implies the first inequality in \eqref{eq:cier}.   Since  $(y,v) \in {\mathcal L}_{p,q}^*$,  we have $\left\langle y, e\right\rangle\geq\|v\|$, from where the second  inequality  in \eqref{eq:cier}  follows.
\end{proof}
In the next proposition we presents  some  relationships  of the monotone extended second-order cone with the monotone nonnegative cone. Since its proof is an immediate consequence  of \eqref{defmesoc}, \eqref{dualmesoc}, \eqref{eq:defmnc} and \eqref{eq:defdmnc},  it will be omitted.
\begin{proposition}\label{prop:cseq}
Let $(x,u), (y,v)\in\R^p\times\R^q$. Then, there hold:
\begin{itemize}
  \item [\textrm{$(i)$}]   $(x,u)\in {\mathcal L}_{p,q}$  if and only if $x-\|u\|e\in \R_{\geq + }^p$.
  \item [\textrm{$(ii)$}]   $(y,v)\in {\mathcal L}_{p,q}^*$ if and only if  $y-\|v\|e^p\in (\R_{\geq + }^p)^*$.
\end{itemize}
\end{proposition}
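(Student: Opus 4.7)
The plan is to verify each equivalence by direct substitution into the defining inequalities, since both sides of each biconditional are written explicitly in the excerpt (equations \eqref{defmesoc}, \eqref{dualmesoc}, \eqref{eq:defmnc}, \eqref{eq:defdmnc}). As the authors indicate, nothing beyond this straightforward comparison is needed.

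For part $(i)$, I would start from the definition $(x,u)\in{\mathcal L}_{p,q}$, which unfolds to $x_1\geq x_2\geq\cdots\geq x_p\geq\|u\|$. Subtracting $\|u\|$ from every coordinate preserves all of the inequalities $x_i\geq x_{i+1}$, so the chain becomes $x_1-\|u\|\geq\cdots\geq x_p-\|u\|\geq 0$, which is precisely membership of $x-\|u\|e$ in $\mathbb R^p_{\geq+}$. The reverse implication is obtained by adding $\|u\|$ back to each coordinate, so the equivalence is immediate.

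For part $(ii)$, I would compute the inner products appearing in \eqref{eq:defdmnc} applied to $y-\|v\|e^p$. Since $e^p$ has its single nonzero entry in position $p$, one has $\langle e^p, e^{1:j}\rangle=0$ for every $j\in\{1,\dots,p-1\}$ and $\langle e^p, e\rangle=1$. Therefore
\[
\langle y-\|v\|e^p,\, e^{1:j}\rangle=\langle y, e^{1:j}\rangle,\qquad j=1,\ldots,p-1,
\]
while $\langle y-\|v\|e^p,\, e\rangle=\langle y,e\rangle-\|v\|$. Plugging these into \eqref{eq:defdmnc} shows that $y-\|v\|e^p\in(\mathbb R^p_{\geq+})^*$ is equivalent to the simultaneous conditions $\langle y,e^{1:j}\rangle\geq 0$ for $j=1,\dots,p-1$ and $\langle y,e\rangle\geq\|v\|$, which is exactly the definition of $(y,v)\in{\mathcal L}_{p,q}^*$ given in \eqref{dualmesoc}.

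The main thing to watch is just the bookkeeping: the shift by $\|v\|e^p$ in $(ii)$ affects only the last of the defining inequalities (the one involving $e$), because $e^p$ is orthogonal to every $e^{1:j}$ with $j<p$. Once this is noted, both equivalences follow at once from the definitions, with no further content—consistent with the authors' remark that the proof is an immediate consequence and can be omitted.
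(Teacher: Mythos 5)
Your proof is correct and is exactly the direct verification from the definitions that the authors have in mind when they state the proof is "an immediate consequence of \eqref{defmesoc}, \eqref{dualmesoc}, \eqref{eq:defmnc} and \eqref{eq:defdmnc}" and omit it. The key bookkeeping observation you make---that $e^p$ is orthogonal to every $e^{1:j}$ with $j<p$ while $\langle e^p,e\rangle=1$, so the shift by $\|v\|e^p$ only affects the final inequality---is precisely the point that makes part $(ii)$ immediate.
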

By using  Lemma~\ref{Le:p3} and  Proposition~\ref{prop:cseq}, next we determine   the  complementarity set of ${\mathcal L}_{p,q}$.
\begin{proposition}\label{prop:csa}
Let $x,y\in{\mathbb R}^p$ and $u,v\in{\mathbb R}^q\setminus\{0\}$.Then  $(x,u,y,v):=((x,u),(y,v))\in C({\mathcal L}_{p,q})$ if and only if   $x_p = \|u\|$, $\langle y, e\rangle=\|v\|$,  $\langle u,v\rangle=-\|u\|\|v\|$, and  $\left(x-\|u\|e,y-\|v\|e^p\right)\in C(\R_{\geq + }^p)$.
\end{proposition}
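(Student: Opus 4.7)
The strategy is to translate the complementarity condition $\langle x,y\rangle + \langle u,v\rangle = 0$ into the equality case of a chain of inequalities built from Lemma~\ref{Le:p3} and the Cauchy--Schwarz inequality, and then read off the four claimed conditions. The non-vanishing hypotheses $u,v\neq 0$ will be used repeatedly to cancel $\|u\|$ and $\|v\|$ from inequalities.

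For the forward direction, assuming $(x,u,y,v)\in C(\mathcal{L}_{p,q})$, I would combine Lemma~\ref{Le:p3}, which gives $\langle x,y\rangle\geq\|u\|\langle y,e\rangle\geq\|u\|\|v\|$, with Cauchy--Schwarz $\langle u,v\rangle\geq -\|u\|\|v\|$, to obtain
\[
0=\langle x,y\rangle+\langle u,v\rangle\geq\|u\|\langle y,e\rangle+\langle u,v\rangle\geq\|u\|\|v\|-\|u\|\|v\|=0.
\]
Equality throughout forces $\langle x,y\rangle=\|u\|\langle y,e\rangle=\|u\|\|v\|$ and $\langle u,v\rangle=-\|u\|\|v\|$. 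Since $\|u\|>0$, cancelling yields $\langle y,e\rangle=\|v\|$. To extract the last two conditions, I would decompose $y=(y-\|v\|e^p)+\|v\|e^p$ and rewrite $\langle x,y\rangle=\|u\|\langle y,e\rangle$ as
\[
\langle x-\|u\|e,\, y-\|v\|e^p\rangle+\|v\|(x_p-\|u\|)=0.
\]
By Proposition~\ref{prop:cseq}, the first term is the pairing of an element of $\mathbb{R}_{\geq+}^p$ with an element of $(\mathbb{R}_{\geq+}^p)^*$, hence nonnegative, while the second term is nonnegative because $x_p\geq\|u\|$ and $\|v\|>0$. Both summands must therefore vanish, giving $x_p=\|u\|$ and $(x-\|u\|e)\perp(y-\|v\|e^p)$, which together with Proposition~\ref{prop:cseq} yield $(x-\|u\|e,\,y-\|v\|e^p)\in C(\mathbb{R}_{\geq+}^p)$.

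For the converse, the four conditions immediately give $(x,u)\in\mathcal{L}_{p,q}$ and $(y,v)\in\mathcal{L}_{p,q}^*$ via Proposition~\ref{prop:cseq}. The complementarity identity reduces to a direct computation: expanding
\[
\langle x,y\rangle=\langle x-\|u\|e,\, y-\|v\|e^p\rangle+\|v\|(x_p-\|u\|)+\|u\|(\langle y,e\rangle-\|v\|)+\|u\|\|v\|
\]
and using the four assumed equalities, all but the last term vanish, so $\langle x,y\rangle=\|u\|\|v\|$; adding $\langle u,v\rangle=-\|u\|\|v\|$ gives $\langle(x,u),(y,v)\rangle=0$.

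The main technical step is isolating $x_p=\|u\|$: it is not immediate from the equality $\langle x,y\rangle=\|u\|\|v\|$ alone, and requires the decomposition above together with the sign information coming from Proposition~\ref{prop:cseq} (this is precisely why the proposition is proved after Lemma~\ref{Le:p3}). Everything else is a careful bookkeeping of which inequality becomes an equality, controlled by the hypothesis $u,v\neq 0$.
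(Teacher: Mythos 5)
Your proof is correct and follows essentially the same route as the paper: the same chain of inequalities from Lemma~\ref{Le:p3} and Cauchy--Schwarz forced into equalities, followed by a sign argument to isolate $x_p=\|u\|$, and the same algebraic expansion for the converse. The only (cosmetic) difference is that where the paper invokes Abel's partial summation formula to show $(\|u\|-x_p)\|v\|$ equals a nonnegative sum, you use the equivalent observation that $\langle x-\|u\|e,\,y-\|v\|e^p\rangle\geq 0$ as a pairing of an element of $\R_{\geq+}^p$ with one of $(\R_{\geq+}^p)^*$, which lets you extract $x_p=\|u\|$ and the orthogonality in one step.
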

\begin{proof}
Take $(x,u,y, v)\in C({\mathcal L}_{p,q})$. The definition of $C({\mathcal L}_{p,q})$ implies  $(x, u)\in {\mathcal L}_{p,q}$, $(y, v)\in
{\mathcal L}_{p,q}^*$ and $\langle (x,u),(y,v)\rangle=0$.   Since $(x, u)\in {\mathcal L}_{p,q}$ and $(y, v)\in {\mathcal L}_{p,q}^*$,
Proposition~\ref{prop:cseq} implies that      $x-\|u\|e\in \R_{\geq + }^p$ and  $y-\|v\|e^p\in (\R_{\geq + }^p)^*$.  Furthermore, the condition
$\langle (x,u),(y,v)\rangle=0$, Lemma~\ref{Le:p3} and the Cauchy inequality imply that
$$
0=\langle x, y\rangle+ \langle u, v\rangle\geq \|u\|\langle y, e\rangle+  \langle u, v\rangle\geq \|u\|\|v\|+ \langle u, v\rangle\geq 0.
$$
Thus,  $\langle x,y\rangle=\|u\|\langle y, e\rangle$, $ \|u\|\langle y, e\rangle=\|u\|\|v\|$ and  $\langle u,v\rangle=-\|u\|\|v\|$. Moreover, taking into account that  $u\neq 0$,  we also have $\langle y, e\rangle=\|v\|$. Hence, using \eqref{eq:fe}, we conclude that
\begin{align*}
\left(\|u\|-x_p\right)\|v\|=\left(\|u\|-x_p\right)\langle y, e\rangle =\sum_{i=1}^{p-1}(x_i-x_{i+1})\sum_{j=1}^{i}y_j.
\end{align*}
Since $(x,u)\in {\mathcal L}_{p,q}$ and $(y,v)\in {\mathcal L}_{p,q}^*$, the left hand side and the right hand side of the last equality have
opposite signs. Hence, they must be $0$. In particular $\left(\|u\|-x_p\right)\|v\|=0$. Thus,  due to  $v\neq 0$,  we conclude that $x_p=\|u\|$.  On the other hand,
$$
\langle x-\|u\|e,y-\|v\|e^p\rangle= \langle x, y\rangle- \|u\|\langle y, e\rangle -  x_p\|v\|+\|u\|\|v\|,
$$
which taking into account that $\langle x,y\rangle=\|u\|\langle y, e\rangle$ and $x_p=\|u\|$, yields $\langle x-\|u\|e,y-\|v\|e^p\rangle=0$. Hence,
$(x-\|u\|e,y-\|v\|e^p)\in C(\R_{\geq + }^p)$, which  concludes  the proof of necessity.

 Reciprocally, assume that   $x_p = \|u\|$,  $\langle y, e\rangle=\|v\|$, $\langle u,v\rangle=-\|u\|\|v\|$ and  $(x-\|u\|e,y-\|v\|e^ p)\in C(\R_{\geq + }^p)$. First note that $x-\|u\|e\in \R_{\geq + }^p$, $y-\|v\|e^p\in (\R_{\geq + }^p)^*$ and $\langle x-\|u\|e,y-\|v\|e^p\rangle=0$.  Since $x-\|u\|e\in \R_{\geq + }^p$ and  $y-\|v\|e^p\in (\R_{\geq + }^p)^*$, Proposition~\ref{prop:cseq} implies$(x, u)\in {\mathcal L}_{p,q}$ and $(y, v)\in {\mathcal L}_{p,q}^*$. On the other hand, the equality   $\langle x-\|u\|e,y-\|v\|e^ p\rangle=0$ implies that
$$
 \langle x, y\rangle- \|u\|\langle y, e\rangle -  x_p\|v\|+\|u\|\|v\|=0.
$$
 Thus, due to $x_p = \|u\|$, we conclude that  $ \langle x, y\rangle= \|u\|\langle y, e\rangle$.    Hence, also using $\langle u,v\rangle=-\|u\|\|v\|$ and $\langle y, e\rangle=\|v\|$, we obtain
$$
 \langle (x,u),(y,v)\rangle=\langle x,y\rangle+ \langle u,v\rangle=  \|u\|\langle y, e\rangle-\|u\|\|v\|=\|u\|\left(\langle y, e\rangle-\|v\|\right)=0.
$$
Therefore, $(x,u,y, v)\in C({\mathcal L}_{p,q})$.
\end{proof}
\section{Projection onto monotone  extended second-order cone} \label{sec:proj}
The aim of this section is to present the formulas for projecting onto the pair of mutually dual monotone extended second-order cone. For that we need a preliminary result. 
\begin{lemma} \label{le:u0v0}
Let $(z,w)\in\R^p\times\R^q$. If $P_{{\mathcal L}_{p,q}}(z,w)=(x,u)$ and $P_{{\mathcal L}_{p,q}^*}(-z,-w)=(y,v)$,   then  the following statements hold:
\begin{itemize}
  \item [\textrm{$(i)$}]   $\langle P_{(\R_{\geq+}^p)^*}(-z), e\rangle \geq \|w\|$  if and only if $u=0$;
  \item [\textrm{$(ii)$}]  $P_{\R_{\geq+}^p}(z)_p \geq \|w\|$ if and only if $v=0$.
   \item [\textrm{$(iii)$}] $\langle P_{(\R_{\geq+}^p)^*}(-z), e\rangle < \|w\|$ and $P_{\R_{\geq+}^p}(z)_p < \|w\|$  if and only if $u\neq 0$ and $v\neq 0$.
\end{itemize}
\end{lemma}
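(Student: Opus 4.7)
The plan is to derive all three statements from Moreau's decomposition theorem (Theorem \ref{th:mt}), reducing each case to the analogous decomposition on the monotone nonnegative cone $\R^p_{\geq+}$ and its dual. By Theorem \ref{th:mt}, the assumptions $(x,u)=P_{\mathcal{L}_{p,q}}(z,w)$ and $(y,v)=P_{\mathcal{L}_{p,q}^*}(-z,-w)$ are equivalent to the pair of relations $z=x-y$, $w=u-v$ together with $((x,u),(y,v))\in C(\mathcal{L}_{p,q})$; conversely, any pair $(x,u),(y,v)$ realising such a decomposition is uniquely identified with these projections. This uniqueness is the engine of all three parts.

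For the forward direction of (i), I would assume $u=0$, so that $v=-w$ and $\|v\|=\|w\|$. Proposition \ref{prop:cseq}(i) applied to $(x,0)\in\mathcal{L}_{p,q}$ forces $x\in\R^p_{\geq+}$, while $(y,v)\in\mathcal{L}_{p,q}^*$ gives $y\in(\R^p_{\geq+})^*$ together with $\langle y,e\rangle\geq\|v\|=\|w\|$. The orthogonality $\langle(x,0),(y,v)\rangle=0$ collapses to $\langle x,y\rangle=0$, so $(x,y)\in C(\R^p_{\geq+})$ with $z=x-y$; applying Moreau's theorem to $\R^p_{\geq+}$ yields $y=P_{(\R^p_{\geq+})^*}(-z)$, and the desired inequality follows. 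For the converse, I would set $\tilde x:=P_{\R^p_{\geq+}}(z)$ and $\tilde y:=P_{(\R^p_{\geq+})^*}(-z)$ and propose the candidate pair $(\tilde x,0)$, $(\tilde y,-w)$: the assumed inequality makes $(\tilde y,-w)\in\mathcal{L}_{p,q}^*$, the containment $(\tilde x,0)\in\mathcal{L}_{p,q}$ is immediate from $\tilde x\in\R^p_{\geq+}$, orthogonality reduces to $\langle\tilde x,\tilde y\rangle=0$, and $(z,w)=(\tilde x,0)-(\tilde y,-w)$ is automatic. Uniqueness in Theorem \ref{th:mt} then forces $(x,u)=(\tilde x,0)$, so $u=0$.

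Part (ii) is handled symmetrically: assuming $v=0$ forces $u=w$ and $\|u\|=\|w\|$; the condition $(x,w)\in\mathcal{L}_{p,q}$ then gives $x\in\R^p_{\geq+}$ with $x_p\geq\|w\|$, and $\langle x,y\rangle=0$ together with $z=x-y$ identifies $x=P_{\R^p_{\geq+}}(z)$, whence $P_{\R^p_{\geq+}}(z)_p\geq\|w\|$. The converse is proved by constructing the candidate $(\tilde x,w)$, $(\tilde y,0)$, checking the $\mathcal{L}_{p,q}$-membership from the assumed inequality, and again invoking uniqueness in Theorem \ref{th:mt} to conclude $v=0$. Part (iii) is then a pure logical consequence: (i) gives $u\neq 0$ iff the first strict inequality and (ii) gives $v\neq 0$ iff the second, so their conjunction is exactly (iii).

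The only delicate point is the construction-and-uniqueness step in (i) and (ii): one must verify carefully that the proposed candidate pairs satisfy every defining property of $C(\mathcal{L}_{p,q})$ (both cone memberships, orthogonality, and the primal-dual decomposition), including the degenerate case $w=0$ where several of the constructed components vanish simultaneously. Once this bookkeeping is done, the uniqueness of the Moreau decomposition delivers the conclusion without further computation.
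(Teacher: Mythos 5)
Your proposal is correct and follows essentially the same route as the paper's proof: both directions of (i) and (ii) are obtained by translating the Moreau decomposition for $\mathcal{L}_{p,q}$ into one for $\R^p_{\geq+}$, the converses by constructing the candidate pairs $(\tilde x,0),(\tilde y,-w)$ and $(\tilde x,w),(\tilde y,0)$ and invoking the uniqueness in Theorem~\ref{th:mt}, and (iii) by contraposition from (i) and (ii). No substantive difference from the paper's argument.
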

\begin{proof}
To prove  
item (i), we first assume that  $u=0$. Considering that  $P_{{\mathcal L}_{p,q}}(z,w)=(x,0)$ and  $P_{{\mathcal L}_{p,q}^*}(-z,-w)=(y,v)$,
Theorem~\ref{th:mt} for ${\mathcal L}_{p,q}$ implies that $(x,0)\in {\mathcal L}_{p,q}$, $(y,v)\in {\mathcal L}_{p,q}^*$,  $\langle (x,0),
(y,v)\rangle=0$ and  $(z, w)=(x,0)-(y,v)$. Hence, we have  $x\in \R_{\geq+}^p$, $y\in (\R_{\geq+}^p)^*$,  $\langle y,e\rangle \geq \|v\|$,
$\langle x,y\rangle =0$,  $z=x-y$ and  $w=-v$.  Hence, by applying Theorem~\ref{th:mt} for
$\R_{\geq+}^p$, we obtain that   $x=P_{\R_{\geq+}^p}(z)$ and $y=P_{(\R_{\geq+}^p)^*}(-z)$. Since $w=-v$ and $\langle y,e\rangle \geq \|v\|$, we
have that $\langle P_{(\R_{\geq+}^p)^*}(-z), e\rangle \geq \|w\|$.  Conversely, suppose that $\langle  P_{(\R_{\geq+}^p)^*}(-z), e\rangle \geq
\|w\|$.  First note that $(P_{\R_{\geq+}^p}(z),0)\in {\mathcal L}_{p,q}$ and, using $\langle  P_{(\R_{\geq+}^p)^*}(-z), e\rangle \geq \|w\|$, we
have  $(P_{(\R_{\geq+}^p)^*}(-z),-w)\in {\mathcal L}_{p,q}^*$.  Moreover,  we conclude that
$(P_{\R_{\geq+}^p}(z),0,P_{(\R_{\geq+}^p)^*}(-z),-w)\in C({\mathcal L}_{p,q})$ and  $(z, w)=(P_{\R_{\geq+}^p}(z), 0)-(P_{(\R_{\geq+}^p)^*}(-z),
-w)$.  Hence, by applying  Theorem~\ref{th:mt} for ${\mathcal L}_{p,q}$, we have $P_{{\mathcal L}_{p,q}}(z,w)= (P_{\R_{\geq+}^p}(z),0)$ and $P_{{\mathcal L}_{p,q}^*}(-z,-w)=(P_{(\R_{\geq+}^p)^*}(-z),-w)$. Therefore,  $u=0$.

To prove item (ii), we first assume that  $v=0$.  Considering that  $P_{{\mathcal L}_{p,q}}(z,w)=(x,u)$ and  $P_{{\mathcal
L}_{p,q}^*}(-z,-w)=(y,0)$, Theorem~\ref{th:mt} for ${\mathcal L}_{p,q}$ implies that $(x,u)\in {\mathcal L}_{p,q}$, $(y,0)\in {\mathcal
L}_{p,q}^*$,  $\langle (x,u), (y,0)\rangle=0$ and  $(z, w)=(x,u)-(y,0)$. Hence, we have  $x\in \R_{\geq+}^p$, $y\in (\R_{\geq+}^p)^*$,  $x_p \geq
\|u\|$,  $\langle x,y\rangle =0$,  $z=x-y$ and  $w=u$. Thus, by applying Theorem~\ref{th:mt} for $\R_{\geq+}^p$, we obtain that
$x=P_{\R_{\geq+}^p}(z)$ and $y=P_{(\R_{\geq+}^p)^*}(-z)$. Since $w=u$ and $x_p \geq \|u\|$, we have that $P_{\R_{\geq+}^p}(z)_p \geq \|w\|$.
Conversely, assume that $P_{\R_{\geq+}^p}(z)_p \geq \|w\|$.  Note that $(P_{(\R_{\geq+}^p)^*}(-z),0)\in {\mathcal L}_{p,q}^*$ and, using
$P_{\R_{\geq+}^p}(z)_p \geq \|w\|$, we have  $(P_{\R_{\geq+}^p}(z),w)\in {\mathcal L}_{p,q}$.  Moreover,
$(P_{\R_{\geq+}^p}(z),w,P_{(\R_{\geq+}^p)^*}(-z),0)\in C({\mathcal L}_{p,q})$ and  $(z,w)=(P_{\R_{\geq+}^p}(z), w)-(P_{(\R_{\geq+}^p)^*}(-z), 0)$.
Hence, by applying  Theorem~\ref{th:mt} for the cone  ${\mathcal L}_{p,q}$, we have $P_{{\mathcal L}_{p,q}}(z,w)= (P_{\R_{\geq+}^p}(z),w)$ and $P_{{\mathcal L}_{p,q}^*}(-z,-w)=(P_{(\R_{\geq+}^p)^*}(-z),0)$. Therefore,  $v=0$.

Item (iii) is an immediate consequence of items $(i)$ and $(ii)$.
\end{proof}
The next lemma  is essential for reducing the projection onto the MESOC to isotonic 
regression. 
\begin{lemma}\label{le:mainlema}
Let $(z,w)\in\R^p\times\R^q$ such that $w\neq 0$.  Assume that  $ P_{{\mathcal L}_{p,q}}(z,w)=(x, \beta w)$ for some $x\in \R^p$ and   $\beta >0$. Then, 
\begin{equation} \label{eq:mle}
 P_{\R_{\geq+}^{p+1}}(z, \|w\|)=(x, \beta \|w\|).
\end{equation}
\end{lemma}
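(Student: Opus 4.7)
The plan is to apply Moreau's decomposition theorem (Theorem~\ref{th:mt}) in both directions: first use the hypothesis together with the $\mathcal{L}_{p,q}$-version of Moreau to extract the complementary projection and the algebraic data it carries; then build a candidate complementarity pair in $\mathbb{R}^{p+1}_{\geq+}\times(\mathbb{R}^{p+1}_{\geq+})^*$ and invoke Moreau again, in the reverse direction, to conclude the identity \eqref{eq:mle}.

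First, I would set $(y,v):=P_{\mathcal{L}_{p,q}^*}(-z,-w)$. Theorem~\ref{th:mt} applied to $\mathcal{L}_{p,q}$ then yields $(z,w)=(x,\beta w)-(y,v)$, so that $y=x-z$ and $v=\beta w-w=(\beta-1)w$, together with $((x,\beta w),(y,v))\in C(\mathcal{L}_{p,q})$. In particular $(x,\beta w)\in\mathcal{L}_{p,q}$, $(y,v)\in\mathcal{L}_{p,q}^*$, and $\langle x,y\rangle+\beta(\beta-1)\|w\|^2=0$.

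Next, I would propose the candidate pair $(\tilde x,\tilde a):=(x,\beta\|w\|)$ and $(\tilde y,\tilde b):=(y,(\beta-1)\|w\|)$ in $\mathbb{R}^{p+1}$ and verify the three hypotheses of Moreau's theorem for the cone $\mathbb{R}^{p+1}_{\geq+}$. Membership of $(\tilde x,\tilde a)$ in $\mathbb{R}^{p+1}_{\geq+}$ is immediate from $(x,\beta w)\in\mathcal{L}_{p,q}$, since the chain $x_1\geq\cdots\geq x_p\geq\beta\|w\|\geq 0$ follows from $\beta>0$ and $\|\beta w\|=\beta\|w\|$. Membership of $(\tilde y,\tilde b)$ in $(\mathbb{R}^{p+1}_{\geq+})^*$ amounts, by \eqref{eq:defdmnc}, to checking that the partial sums $\sum_{i=1}^j y_i$ for $j=1,\ldots,p-1$ are nonnegative (inherited from $(y,v)\in\mathcal{L}_{p,q}^*$), that $\sum_{i=1}^p y_i\geq 0$ (which follows from $\langle y,e\rangle\geq\|v\|=|\beta-1|\|w\|\geq 0$), and finally that $\sum_{i=1}^p y_i+(\beta-1)\|w\|\geq 0$, which is clear when $\beta\geq 1$ and, when $\beta<1$, follows from $\langle y,e\rangle\geq(1-\beta)\|w\|$. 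Orthogonality is the short identity
\[
\langle(\tilde x,\tilde a),(\tilde y,\tilde b)\rangle=\langle x,y\rangle+\beta(\beta-1)\|w\|^2=\langle(x,\beta w),(y,v)\rangle=0,
\]
and the decomposition equation reads $(\tilde x,\tilde a)-(\tilde y,\tilde b)=(x-y,\|w\|)=(z,\|w\|)$.

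With all four items verified, the implication (i)$\Rightarrow$(ii) of Theorem~\ref{th:mt} applied to the proper cone $\mathbb{R}^{p+1}_{\geq+}$ at the point $(z,\|w\|)$ gives $P_{\mathbb{R}^{p+1}_{\geq+}}(z,\|w\|)=(x,\beta\|w\|)$, which is \eqref{eq:mle}. The only subtle point is the last of the dual-cone inequalities for $(\tilde y,\tilde b)$, where the sign of $\beta-1$ matters; this is where the estimate $\langle y,e\rangle\geq\|v\|=|\beta-1|\|w\|$ coming from $(y,v)\in\mathcal{L}_{p,q}^*$ is indispensable. Everything else is bookkeeping once the correct candidate complementary vector $(y,(\beta-1)\|w\|)$ is identified by matching the $\mathcal{L}_{p,q}$-decomposition component by component.
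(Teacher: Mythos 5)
Your proof is correct, but it follows a genuinely different route from the paper's. The paper argues by contradiction using only the variational (nearest-point) characterization of the projection: assuming $P_{\R_{\geq+}^{p+1}}(z,\|w\|)=(y,\mu\|w\|)\neq(x,\beta\|w\|)$, it notes that $(y,\mu w)\in{\mathcal L}_{p,q}$, and that for collinear, same-oriented vectors $(\|w\|-\|u\|)^2=\|w-u\|^2$, so the strict distance inequality in $\R^{p+1}$ transfers to a strict distance inequality in $\R^{p}\times\R^q$, contradicting the optimality of $(x,\beta w)$. You instead go through Moreau's decomposition twice: you extract the complementary projection $(y,v)=(x-z,(\beta-1)w)$ from the ${\mathcal L}_{p,q}$-decomposition, then assemble the candidate pair $\bigl((x,\beta\|w\|),(y,(\beta-1)\|w\|)\bigr)$ and verify membership, orthogonality and the sum condition for $C(\R^{p+1}_{\geq+})$, invoking the explicit dual-cone descriptions \eqref{dualmesoc} and \eqref{eq:defdmnc}. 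Your verification is complete — in particular you correctly isolate the one delicate inequality $\langle y,e\rangle+(\beta-1)\|w\|\geq 0$ and settle it via $\langle y,e\rangle\geq\|v\|=|\beta-1|\,\|w\|$, covering both signs of $\beta-1$. The trade-off: your argument is constructive and yields the complementary projection $P_{(\R^{p+1}_{\geq+})^*}(-z,-\|w\|)$ as a by-product, but it leans on the computed dual cones; the paper's argument is shorter and more elementary, needing neither dual cone, only the metric definition of the projection and the collinearity observation.
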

\begin{proof}
Suppose by contradiction that \eqref{eq:mle} does not hold.   Hence,  $P_{\R_{\geq+}^{p+1}}(z,\|w\|)=(y,\mu\|w\|)$ for some $y\in \R^p$ and
$\mu\geq 0$ with  $(y, \mu\|w\|) \ne (x,\beta\|w\|)$. Let $u:=\beta w$ and   $v:=\mu w$. Then, we have $P_{\R_{\geq+}^{p+1}}(z,\|w\|)=(y,\|v\|)$
and consequently $(y,v)\in {\mathcal L}_{p,q}$. Hence,  due to  
\[\R_{\geq+}^{p+1}\ni (x,\|u\|)\ne (y,\|v\|)=P_{\R_{\geq+}^{p+1}}(z,\|w\|),\] we 
obtain that 
$$
\|z-x\|^2+(\|w\|-\|u\|)^2>\|z-y\|^2+(\|w\|-\|v\|)^2.
$$
 Because $w$, $u$ and $u$  are collinear vectors with the same orientation, the last inequality implies
$\|z-x\|^2+\|w-u\|^2>\|z-y\|^2+\|w-v\|^2$, or equivalently
$$
\|(z,w)-(x,u)\|^2>\|(z,w)-(y,v)\|^2,
$$
which contradicts $P_{{\mathcal L}_{p,q}}(z,w)=(x,u)$, as $(y,v)\in {\mathcal L}_{p,q}$.
\end{proof}
In order to  simplify the notations of our main result,   for a fixed  $z\in\R^p$ and $w\in\R^q$,  we define 
\begin{equation}   \label{eq:fphi}
 f(\lambda):=z-\frac{1}{1+\lambda}\|w\|e+\frac{\lambda}{1+\lambda}\|w\|e^p.
\end{equation}
\begin{theorem}\label{th:maintheo}
Let $(z,w)\in\R^p\times\R^q$, then  the following statements hold:
\begin{itemize}
  \item [\textrm{$(1)$}]
  If $\langle P_{(\R_{\geq+}^p)^*}(-z), e\rangle \geq \|w\|$, then 
  $$
  P_{{\mathcal L}_{p,q}}(z,w)= (P_{\R_{\geq+}^p}(z),0), \quad \qquad P_{{\mathcal L}_{p,q}^*}(-z,-w)=(P_{(\R_{\geq+}^p)^*}(-z),-w);
  $$
  \item [\textrm{$(2)$}]
  If $P_{\R_{\geq+}^p}(z)_p \geq \|w\|$, then 
  $$
  P_{{\mathcal L}_{p,q}}(z,w)=(P_{\R_{\geq+}^p}(z),w), \quad \qquad P_{{\mathcal L}_{p,q}^*}(-z,-w)=(P_{(\R_{\geq+}^p)^*}(-z),0);
  $$
  \item [\textrm{$(3)$}]
  If $\langle P_{(\R_{\geq+}^p)^*}(-z), e\rangle < \|w\|$ and  $P_{\R_{\geq+}^p}(z)_p < \|w\|$,  then there holds 
  \begin{equation}\label{eeee2}
  P_{{\mathcal L}_{p,q}}(z,w)=\left( P_{\R_{\geq+}^p}(f(\lambda))+ \frac{1}{1+\lambda}\|w\|e ,\frac{1}{1+\lambda}w\right), 
  \end{equation}
  \begin{equation}\label{eeee3}
  P_{{\mathcal L}_{p,q}^*}(-z,-w)=\left(   P_{(\R_{\geq+}^p)^*}(-f(\lambda))+\frac{\lambda}{1+\lambda}\|w\|e^ p,-\frac{\lambda}{1+\lambda}w\right), 
  \end{equation}
\end{itemize}
where $\lambda:={\|w\|}/{\langle P_{\mathbb R^{p+1}_{\geq +}}(z,\|w\|),e^{p+1}\rangle}-1$.
\end{theorem}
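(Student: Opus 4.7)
The plan is to split the three cases according to whether the $q$-components of the primal and dual projections vanish, as governed by Lemma~\ref{le:u0v0}. Parts (1) and (2) are essentially already in hand: their hypotheses and conclusions match precisely the converse directions inside the proofs of items (i) and (ii) of Lemma~\ref{le:u0v0}, which explicitly construct $P_{\mathcal{L}_{p,q}}(z,w)$ and $P_{\mathcal{L}_{p,q}^*}(-z,-w)$ in the asserted forms. So the only real work lies in case (3).

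For case (3), Lemma~\ref{le:u0v0}(iii) guarantees $u\neq 0$ and $v\neq 0$. I would then apply Moreau's theorem (Theorem~\ref{th:mt}) to obtain $(x,u,y,v)\in C(\mathcal{L}_{p,q})$ with $(z,w)=(x,u)-(y,v)$, and feed this into Proposition~\ref{prop:csa} to extract the full complementarity data: $x_p=\|u\|$, $\langle y,e\rangle=\|v\|$, $\langle u,v\rangle=-\|u\|\|v\|$, and $(x-\|u\|e,\,y-\|v\|e^p)\in C(\R_{\geq+}^p)$. The sharp Cauchy--Schwarz equality $\langle u,v\rangle=-\|u\|\|v\|$ together with $u,v\neq 0$ forces $u$ and $v$ to be anti-parallel; since $w=u-v$, both vectors then lie on the line spanned by $w$, and a short sign check yields the parametrization
\[
u=\tfrac{1}{1+\lambda}\,w,\qquad v=-\tfrac{\lambda}{1+\lambda}\,w,
\]
for a uniquely determined $\lambda>0$ (positivity coming from $v\neq 0$).

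With this parametrization in hand, the inner $\R_{\geq+}^p$-complementarity pair has difference
\[
(x-\|u\|e)-(y-\|v\|e^p)=z-\tfrac{\|w\|}{1+\lambda}\,e+\tfrac{\lambda\|w\|}{1+\lambda}\,e^p=f(\lambda),
\]
so applying Moreau's theorem again, now to $\R_{\geq+}^p$, gives $x-\|u\|e=P_{\R_{\geq+}^p}(f(\lambda))$ and $y-\|v\|e^p=P_{(\R_{\geq+}^p)^*}(-f(\lambda))$, which rearrange into \eqref{eeee2} and \eqref{eeee3} up to knowing $\lambda$. To pin down $\lambda$, I would invoke Lemma~\ref{le:mainlema}: since the projection onto $\mathcal{L}_{p,q}$ has $q$-component $\beta w$ with $\beta=1/(1+\lambda)>0$, the lemma yields $P_{\R_{\geq+}^{p+1}}(z,\|w\|)=(x,\|w\|/(1+\lambda))$, and reading off the $(p+1)$-th coordinate produces exactly $\lambda=\|w\|/\langle P_{\R_{\geq+}^{p+1}}(z,\|w\|),e^{p+1}\rangle-1$. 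I expect the main obstacle to be the parametrization step---cleanly extracting the anti-parallel structure and verifying the identity $(x-\|u\|e)-(y-\|v\|e^p)=f(\lambda)$---while the final identification of $\lambda$ through Lemma~\ref{le:mainlema} is a direct read-off from the $(p+1)$-th coordinate.
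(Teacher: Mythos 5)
Your proposal is correct and follows essentially the same route as the paper: Lemma~\ref{le:u0v0} to split the cases, Moreau's theorem plus Proposition~\ref{prop:csa} to extract the complementarity data in case (3), the anti-parallel parametrization $u=\frac{1}{1+\lambda}w$, $v=-\frac{\lambda}{1+\lambda}w$, a second application of Moreau's theorem for $\R_{\geq+}^p$ to obtain \eqref{eeee2}--\eqref{eeee3}, and Lemma~\ref{le:mainlema} to identify $\lambda$ from the $(p+1)$-th coordinate. The only cosmetic difference is that for items (1) and (2) you read the conclusions off the converse directions of Lemma~\ref{le:u0v0}(i)--(ii), whereas the paper re-derives them via Moreau's theorem; both are valid.
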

\begin{proof}
Let $(z,w)\in\R^p\times\R^q$. Our task is to find $(x, u)\in {\mathcal L}_{p,q}$  and $(y, v)\in {\mathcal L}_{p,q}^* $ such that
\begin{equation} \label{eq:eqiv}
P_{{\mathcal L}_{p,q}} (z,w) = (x, u), \qquad P_{{\mathcal L}_{p,q}^*}(-z,-w) = (y, v).
\end{equation}
 To prove item (1),  assume that $\langle P_{(\R_{\geq+}^p)^*}(-z), e\rangle \geq \|w\|$. Thus, by   item~(i) of  Lemma~\ref{le:u0v0} we must  have $u=0$.  Since  $P_{{\mathcal L}_{p,q}} (z,w) = (x, 0)$ and $P_{{\mathcal L}_{p,q}^*}(-z,-w) = (y, v)$,  applying Theorem~\ref{th:mt} for ${\mathcal L}_{p,q}$ we have  $(x,0)\in {\mathcal L}_{p,q}$, $(y,v)\in {\mathcal L}_{p,q}^*$, $\langle (x,0), (y, v)\rangle=0$ and  $(z,w)=(x,0)-(y,v)$. Thus,  $x\in{\R_{\geq+}^p}$ and $y\in ({\R_{\geq+}^p})^*$,  $\langle x, y \rangle=0$, $z=x-y$ and $v=-w$. Now, applying Theorem~\ref{th:mt} for  ${\R_{\geq+}^p}$ we conclude that $x=P_{\R_{\geq+}^p}(z)$ and $y=P_{(\R_{\geq+}^p)^*}(-z)$, which together with \eqref{eq:eqiv}, $u=0$ and $v=-w$  proves  item $(1)$.

We proceed  to prove  item (2).  Since  $P_{\R_{\geq+}^p}(z)_p \geq \|w\|$, the    item~(ii) of  Lemma~\ref{le:u0v0}  implies  $v=0$.  Considering that   $P_{{\mathcal L}_{p,q}} (z,w) = (x, u)$ and $P_{{\mathcal L}_{p,q}^*}(-z,-w) = (y, 0)$,  applying Theorem~\ref{th:mt} for ${\mathcal L}_{p,q}$ we have  $(x,u)\in {\mathcal L}_{p,q}$, $(y,0)\in {\mathcal L}_{p,q}^*$, $\langle (x,u), (y, 0)\rangle=0$ and  $(z,w)=(x,u)-(y,0)$. Hence,   $x\in{\R_{\geq+}^p}$ and $y\in ({\R_{\geq+}^p})^*$,  $\langle x, y \rangle=0$, $z=x-y$ and $u=w$. Using Theorem~\ref{th:mt} for  ${\R_{\geq+}^p}$, we conclude that $x=P_{\R_{\geq+}^p}(z)$ and $y=P_{(\R_{\geq+}^p)^*}(-z)$, which together with \eqref{eq:eqiv}, $v=0$ and $u=w$  yields  item (2).

To prove  item (3), we first note that  conditions $\langle P_{(\R_{\geq+}^p)^*}(-z), e\rangle < \|w\|$ and  $P_{\R_{\geq+}^p}(z)_p < \|w\|$ together with item~(iii) of   Lemma~\ref{le:u0v0} implies that $u\neq 0$ and $v\neq 0$. Moreover, it follows from Theorem~\ref{th:mt}  that  \eqref{eq:eqiv} is equivalent to
\begin{equation} \label{eq:tef}
(x,u,y, v)\in C({\mathcal L}_{p,q}) \qquad   (z,w)=(x,u)-(y,v).
\end{equation}
Due to  $u\neq 0$, $v\neq 0$ and   \eqref{eq:tef}, we  apply Proposition~\ref{prop:csa} to obtain the following equivalent conditions
 \begin{equation} \label{eq:teff}
 x_p = \|u\|, \qquad \langle y, e\rangle=\|v\|,\qquad   \langle u,v\rangle=-\|u\|\|v\|, \qquad (x-\|u\|e,y-\|v\|e^ p)\in C(\R_{\geq + }^p),
 \end{equation}
  \begin{equation} \label{eq:tteff}
  z=x-y, \qquad w=u-v.
  \end{equation}
Since $\langle u,v\rangle=-\|u\|\|v\|$, $u\neq 0$ and  $v\neq 0$, there exists $\lambda>0$ such that $v=-\lambda u$. Hence, it follows from the second equality in  \eqref{eq:tteff} that
 \begin{equation} \label{eq:systeq}
u=\frac{1}{1+\lambda}w, \qquad \quad v=-\frac{\lambda}{1+\lambda}w.
  \end{equation}
Meanwhile, the second equality in \eqref{eq:teff}  gives $\langle y,e\rangle=\|v\|$. Thus we have that
\begin{align}
\langle y,e\rangle=\frac{\lambda}{1+\lambda}\|w\|.\label{eq:p1}
\end{align} 
Since  $(x-\|u\|e,y-\|v\|e^ p)\in C(\R_{\geq + }^p)$ by  \eqref{eq:teff}, applying Theorem~\ref{th:mt} for  ${\R_{\geq+}^p}$ we obtain
$$
x-\|u\|e=P_{\R_{\geq+}^p}\left(x-\|u\|e-y+\|v\|e^ p\right), \qquad  y-\|v\|e^ p= P_{(\R_{\geq+}^p)^*}\left(-x+\|u\|e+y-\|v\|e^ p\right).
$$
Thus, by using the first equality in \eqref{eq:tteff} and \eqref{eq:systeq}, we obtain after some calculations that
\begin{align}
x&=  P_{\R_{\geq+}^p}\left(z-\frac{1}{1+\lambda}\|w\|e+\frac{\lambda}{1+\lambda}\|w\|e^ p\right)+ \frac{1}{1+\lambda}\|w\|e; \label{eq:sx}\\
y&=  P_{(\R_{\geq+}^p)^*}\left(-z+\frac{1}{1+\lambda}\|w\|e-\frac{\lambda}{1+\lambda}\|w\|e^ p\right) +  \frac{\lambda}{1+\lambda}\|w\|e^ p. \label{eq:sy}
\end{align}
Hence,  combining \eqref{eq:eqiv} with \eqref{eq:systeq}, \eqref{eq:sx} and \eqref{eq:sy}  and considering  \eqref{eq:fphi}, we obtain
\eqref{eeee2} and \eqref{eeee3}.  It remains to compute $\lambda$. For that,  by using \eqref{eeee2} we can  apply  Lemma~\ref{eq:mle} with  
$$
x=P_{\R_{\geq+}^p}(f(\lambda))+ \frac{1}{1+\lambda}\|w\|e, \qquad \beta=\frac{1}{1+\lambda}, 
$$
to  conclude     that 
\begin{equation} \label{eq:mlef}
 P_{\R_{\geq+}^{p+1}}(z, \|w\|)=\left(P_{\R_{\geq+}^p}(f(\lambda))+ \frac{1}{1+\lambda}\|w\|e, \frac{1}{1+\lambda} \|w\|\right), 
\end{equation}
which gives $\langle P_{\mathbb R^{p+1}_{\geq  +}}(z,\|w\|),e^{p+1}\rangle=\frac{1}{1+\lambda}\|w\|$. Therefore, 
$\lambda={\|w\|}/{\langle P_{\mathbb R^{p+1}_{\geq +}}(z,\|w\|),e^{p+1}\rangle}-1$, which concludes the proof.
\end{proof}
\begin{remark}
If $p=1$, then the projection formulas in Theorem \ref{th:maintheo} become the projection onto the second-order cone (see Exercise 8.3 (c) in \cite{BoydVandenberghe}).
\end{remark}
The next theorem states that to compute a projection onto the cone ${\R_{\geq+}^p}$ it is sufficient to know how to compute a projection onto the cones $ \R^p_{\geq}$ and ${\mathbb R}^p_+$, its proof can be found in \cite{nemeth2012projec}.  For the sake of completeness we include its proof here.
\begin{theorem} \label{th:nt}
For any  $z\in \R^p$, there holds $P_{\R_{\geq+}^p}(z)=P_{\R_{\geq}^p}(z)^+=P_{{\mathbb R}^p_+}(P_{\R_{\geq}^p}(z))$.
\end{theorem}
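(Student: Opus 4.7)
The second equality $P_{\R_{\geq}^p}(z)^+ = P_{{\mathbb R}^p_+}(P_{\R_{\geq}^p}(z))$ is nothing more than the identity $w^+ = P_{{\mathbb R}^p_+}(w)$ recorded in the Preliminaries, applied to $w = P_{\R_{\geq}^p}(z)$. All the work therefore lies in proving the first equality $P_{\R_{\geq+}^p}(z) = P_{\R_{\geq}^p}(z)^+$. My plan is to verify the hypotheses of Moreau's decomposition theorem (Theorem~\ref{th:mt}) for the cone $\R_{\geq+}^p$ with candidate pair $\bigl(x^+,\,x^+-z\bigr)$, where $x := P_{\R_{\geq}^p}(z)$. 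Writing $\tilde y := x - z$, Moreau applied to $\R_{\geq}^p$ supplies the raw materials $x\in \R_{\geq}^p$, $\tilde y\in(\R_{\geq}^p)^*$ and $\langle x,\tilde y\rangle=0$, and also the identity $z = x^+ - (x^+-z)$ trivially.

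Membership $x^+\in\R_{\geq+}^p$ is immediate since coordinatewise $t\mapsto t^+$ preserves the inequalities $x_1\geq\cdots\geq x_p$ and forces $x^+_p\geq 0$. For the dual-side condition, I would use the decomposition $x^+ - z = \tilde y + x^-$: since $\tilde y\in(\R_{\geq}^p)^*$ gives $\langle\tilde y,e^{1:j}\rangle\geq 0$ for $j=1,\ldots,p-1$ and $\langle\tilde y,e\rangle=0$, while $x^-\in\mathbb R^p_+$ contributes only nonnegative partial sums, the defining inequalities of $(\R_{\geq+}^p)^*$ in \eqref{eq:defdmnc} follow at once.

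The delicate step is the orthogonality $\langle x^+,\,x^+-z\rangle=0$. Since $x^+\perp x^-$ componentwise, this collapses to $\langle x^+,\tilde y\rangle=0$. Here I would pull out Abel's summation formula \eqref{eq:fe}. Setting $S_i := \sum_{j=1}^{i}\tilde y_j$, Abel gives
\[
\langle x,\tilde y\rangle \;=\; \sum_{i=1}^{p-1}(x_i-x_{i+1})S_i + x_p S_p \;=\; 0,
\]
and $\tilde y\in(\R_{\geq}^p)^*$ forces $S_p=0$ and $S_i\geq 0$ for $i<p$. Since every summand is a product of nonnegative factors, each $(x_i-x_{i+1})S_i$ vanishes individually, so for every $i<p$ either $x_i=x_{i+1}$ (whence $x^+_i=x^+_{i+1}$) or $S_i=0$. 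Applying Abel a second time to $\langle x^+,\tilde y\rangle$ and using $S_p=0$, the same dichotomy kills every summand, yielding $\langle x^+,\tilde y\rangle=0$. A final invocation of Theorem~\ref{th:mt} for $\R_{\geq+}^p$ then identifies $x^+ = P_{\R_{\geq+}^p}(z)$. The main obstacle I anticipate is precisely the orthogonality step; the monotonicity preservation and the dual-cone inequalities are mechanical, but extracting $\langle x^+,\tilde y\rangle=0$ from $\langle x,\tilde y\rangle=0$ seems to genuinely require the summand-by-summand analysis afforded by Abel's formula.
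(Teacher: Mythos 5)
Your proof is correct and follows the same architecture as the paper's: both verify Moreau's decomposition (Theorem~\ref{th:mt}) for $\R_{\geq+}^p$ with the candidate pair $(x^+,\,x^-+\tilde y)$, where $x=P_{\R_{\geq}^p}(z)$ and $\tilde y=P_{(\R_{\geq}^p)^*}(-z)$, and the membership arguments (monotonicity of $t\mapsto t^+$, the inclusions $(\R_{\geq}^p)^*\subseteq(\R_{\geq+}^p)^*$ and $\R^p_+\subseteq(\R_{\geq+}^p)^*$) coincide with the paper's. The only divergence is the step you rightly flag as delicate, $\langle x^+,\tilde y\rangle=0$: you obtain it by a summand-by-summand complementary-slackness analysis via Abel's formula, whereas the paper gets it more quickly by observing that $-x^-$ also lies in $\R_{\geq}^p$ (because $t\mapsto -t^-$ is nondecreasing), so that $\langle x^+,\tilde y\rangle\ge 0$ and $\langle -x^-,\tilde y\rangle\ge 0$ are two nonnegative numbers summing to $\langle x,\tilde y\rangle=0$, hence each is zero. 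Both arguments are valid; yours yields the extra information of exactly which Abel summands vanish, while the paper's sign argument avoids invoking Abel's formula at this point.
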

\begin{proof}
To simplify the notations set ${\cal K}={\R_{\geq}^p}$. Thus, Theorem~\ref{th:mt} yields  
$$
z=P_{\cal K}(z)-P_{{\cal K}^*}(-z), \qquad  \langle P_{\cal K}(z), P_{{\cal K}^*}(-z)\rangle = 0.
$$
Moreover, as $P_{\cal K}(z)= P_{\cal K}(z)^{+}- P_{\cal K}(z)^{-}$, the last inequality becomes  
\begin{equation} \label{eq:filt}
z=P_{\cal K}(z)^{+}- P_{\cal K}(z)^{-}-P_{{\cal K}^*}(-z), \qquad  \langle P_{\cal K}(z)^{+}- P_{\cal K}(z)^{-}, P_{{\cal K}^*}(-z)\rangle = 0.
\end{equation} 
Note that $P_{\cal K}(z)^{+}\in {\cal K}$ and $- P_{\cal K}(z)^{-}\in {\cal K}$. Indeed, 
due to  $P_{\cal K}(z) \in {\cal K}$ and $- P_{\cal K}(z) \in {\cal K}$, we have from  \eqref{eq:defmca} that    $P_{\cal K}(z)_1\geq P_{\cal
K}(z)_2\geq\cdots\geq P_{\cal K}(z)_p$ and $-P_{\cal K}(z)_1\geq -P_{\cal K}(z)_2\geq\cdots\geq -P_{\cal K}(z)_p$. Hence,  bearing in mind that
the functions $\R \ni t\mapsto t^+$ and  $\R \ni t\mapsto -t^-$ are monotone 
increasing, we also have  $P_{\cal K}(z)^{+}_1\geq P_{\cal K}(z)^{+}_2\geq\cdots\geq P_{\cal K}(z)^{+}_p\geq 0$ and $-P_{\cal K}(z)^{-}_1\geq
-P_{\cal K}(z)^{-}_2\geq\cdots\geq -P_{\cal K}(z)^{-}_p$. Thus, $P_{\cal K}(z)^{+}\in{\R_{\geq +}^p}\subset  {\cal K}$ and $-P_{\cal K}(z)^{-}\in
{\cal K}$. Therefore, the second equality in 
\eqref{eq:filt} yields 
\begin{equation} \label{eq:silt}
\langle P_{\cal K}(z)^{+}, P_{{\cal K}^*}(-z)\rangle= \langle  P_{\cal K}(z)^{-}, P_{{\cal K}^*}(-z)\rangle = 0.
\end{equation} 
On the other hand,  \eqref{eq:defdmca} and \eqref{eq:defdmnc} implies  ${\cal K}^* \subset ({\R_{\geq +}^p})^*$.  Furthermore,  due to $ P_{\cal
K}(z)^{-}\in{\R_{+}^p}$ and $ {\R_{+}^p}\subset ({\R_{\geq +}^p})^*$, we conclude that 
\begin{equation} \label{eq:tilt}
P_{\cal K}(z)^{-}+ P_{{\cal K}^*}(-z) \in  ({\R_{\geq +}^p})^*.
\end{equation} 
Considering \eqref{eq:silt} and  $ \langle P_{\cal K}(z)^{+}, P_{\cal K}(z)^{-}\rangle =0$, we also have
\begin{equation} \label{eq:filtn}
\langle P_{\cal K}(z)^{+}, P_{\cal K}(z)^{-}+ P_{{\cal K}^*}(-z)\rangle =0.
\end{equation} 
Therefore,  the reformulation   
$z=P_{\cal K}(z)^{+}- \left(P_{\cal K}(z)^{-}+P_{{\cal K}^*}(-z)\right)$ 
of \eqref{eq:filt}$_1$, together with the formulas 
$P_{\cal K}(z)^{+}\in {\R_{\geq +}^p}$, \eqref{eq:tilt}, \eqref{eq:filtn} and Theorem~\ref{th:mt} imply that $P_{\cal
K}(z)^{+}=P_{\R_{\geq+}^p}(z)$%
, which is the desired result.
 \end{proof}
We end this section by pointing out that  efficient numerical methods to compute projection onto the cone $ \R^p_{\geq}$ can be found by using the
pool-adjacent-violators algorithm for isotonic regreession 
\cite{BestChakravarti1990, mair2009isotone2009}. 
For projecting onto the cone ${\mathbb R}^p_+$, we only need to apply the formula of Theorem
\ref{th:nt} to the output of the $p$-dimensional PAVA.  

In the next section we present a conic optimisation problem with respect to the MESOC related
to a portfolio optimisation problem. We note that this problem is an adaptation of Xiao's
application in Chapter 4 of his PhD dissertation \cite{Xiao2021} 
(which is an improved version of the application in Section 3 of \cite{NemethXiao2018}) to 
the monotone case. Such problems can be solved by algorithms where the projection
onto the intersection of MESOC with a hyperplane is important \cite{HenrionMalick2012}. Our 
efficient projection method onto MESOC can be incorporated into Dykstra's alternating 
projection method \cite{BoyleDykstra1986} for the aforementioned intersection. One can also investigate a possible more direct adaptation of our method 
to such projections.
\section{An application of the monotone extended second-order cone to portfolio optimisation} \label{eq:apo}
 Markowitz developed the mean-variance (MV) model in \cite{markowitz1959portfolio}, which is the classical method in investigating the problem of portfolio optimisation. Suppose we build portfolio by using $n$ arbitrary assets. Let $w\in\R^n$ denote the weights of the assets, $r\in\R^n$ represent the return of assets and $\Sigma\in\R^n\times\R^n$ be the covariance matrix. Then, the two traditional and equivalent MV models could be given as:
\[
\min_w\left\{w^{\top}\Sigma w:\,r^{\top}w\geq \alpha,\,e^{\top}w=1 \right\}
\]
and
\[
\max_w\left\{r^{\top}w:\,w^{\top}\Sigma w\leq\beta,\,e^{\top}w=1 \right\},
\]
where $\alpha$ is the minimum profit that the investor demands and $\beta$ is the minimum risk that the investor wants to tolerate. They are typical quadratic optimisation problems with higher computational complexity.

In order to reduce the complexity of solving the portfolio optimisation problem, based on the traditional mean-variance model, Konno and Yamazaki developed the mean-absolute deviation (MAD) model in \cite{konno1991mean}, by replacing the risk measure from the covariance matrix to the absolute deviation. They demonstrated that the results obtained by using MAD model are similar with the results obtained by using the MV model when the return of assets are multivariate normally distributed. It has also been recognized that the MAD model has reduced the computational complexity significantly \cite{konno2005mean,konno1999mean}. Before introducing the MAD model, we will give the definitions of some key parameters.

Denote the returns of assets be $\tilde{r} = (\tilde{r_1},\ldots,\tilde{r_n})^{\top} \in \R^n$. Suppose that they are distributed over a finite sequence of points $R^j = \left(R_{1}^j,\ldots,R_{n}^j\right)^{\top}\in\R^n$, where $j=1,\ldots,T$ and $R^j$ denotes $T$ different scenarios such that the behaviour of the assets are different in different scenarios. Meanwhile, denote $f_j$ the probability distribution of the rates of returns of assets, that is
\[
f_j=\textrm{Probability}\left\{ (\tilde{r_1},\ldots,\tilde{r_n})^{\top}=\left(R_{1}^j,\ldots,R_{n}^j\right)^{\top}\right\},\qquad j=1,\ldots,T.
\]
The sequences $\{R^j\}_{j=1,\ldots,T}$ and $\{f_j\}_{j=1,\ldots,T}$ can be obtained by using the historical data of assets and some techniques for the
future projection of these assets. Meanwhile, since $f_j\in [0,1]$, $j=1,\ldots,T$ represent probabilities, we will have $e^\top f=1$, where $f=(f_1,\ldots,f_T)$.
In particular, we have
\[
r=\mathbb{E}[\tilde{r}]=f_1R^1+\cdots +f_TR^T.
\]
In order to measure the uncertainty of the returns of the assets for
$j=1,\ldots,T$, let us define $U=(U_1,\ldots,U_T)^\top$, where $U_j=R^j-r$. Let
$y_j$ denote the upper bound of disturbance of return at day $j$. Then, the
traditional MAD model can be represented as the following linear programming problem:
\begin{equation*}
\begin{aligned}
& \underset{y,w}{\text{min}}
& & c_0f^{\top}y-r^{\top}w \\
& \text{s.t.}
& & y_j\geq|U^{\top}_jw|, \quad j=1,\ldots,T,\\
&&& e^{\top}w=1,
\end{aligned}
\end{equation*}
where $c_0>0$ is the Arrow-Pratt absolute risk-aversion index defined in \cite{kallberg1984mis}. 

In reality, the uncertainty of the returns of the assets will increase with the increasing of the investment horizon. Thus, it is meaningful to optimize the MAD model to make it more in line with the real-world market behaviour. Meanwhile, by using Cauchy's inequality, we also have $|U^{\top}_jw|\leq \|U_j\|\|w\|$ for any $j$. Then, based on the current MAD model, we obtain the following related problem
\begin{equation*}
\begin{aligned}
& \underset{y,w}{\text{min}}
& & c_0f^{\top}y-r^{\top}w \\
& \text{s.t.}
& & y_T\geq y_{T-1}\geq\ldots\geq y_1\geq \|U_{j^*}\|\|w\|,\\
&&& e^{\top}w=1,
\end{aligned}
\end{equation*}
where $j^*=\argmin_j|U^{\top}_jw|$, for $j=1,\ldots,T$.
Note that the vector \[\left(\frac{y_T}{\|U_{j^*}\|},\frac{y_{T-1}}{\|U_{j^*}\|},\ldots,\frac{y_1}{\|U_{j^*}\|},w\right)^{\top}\] belongs to the monotone extended second-order cone  ${\mathcal L}_{T,n}$. Thus, the last problem is equivalent to the following conic optimization problem:

\begin{equation*}
\begin{aligned}
& \underset{y,u}{\text{min}}
& & c_0f^{\top}y-r^{\top}\frac{u}{\|U_{j^*}\|} \\
& \text{s.t.}
& & e^{\top}u=\|U_{j^*}\|,\\
&&& \left(y_T,y_{T-1},\ldots,y_1,u\right)^{\top}\in {\mathcal L}_{T,n},  
\end{aligned}
\end{equation*}

where $u:=w\|U_{j^*}\|$.
\section{Final remarks}
In this  paper we have introduced the monotone extended second-order cone and its 
dual cone. We have reduced the projection onto the MESOC to two isotonic regreessions in 
neighboring dimensions. The isotonic regression can be solved efficiently by 
pool-adjacent-violators algorithm \cite{BestChakravarti1990, mair2009isotone2009}. We 
have also presented an application of the MESOC to portfolio optimization via a conic 
optimization problem related to the mean-absolute deviation model \cite{konno1991mean}. 
Knowing the projection onto the MESOC can be a useful ``ingredient'' of projection methods 
for the latter problem. We predict more direct applications of the projection onto MESOC
to practical problems. These applications would be regressions with respect to 
a set of points whose distance (more generally a ``cost'') from a source point is expected to
decrease and only the position of the point closest to the source is important. For 
example 
to capture a strong enough ``signal'' of a point from the source it is expected to put the
better capturing devices further from the source. If one point is (significantly) closer to 
the source than the other ones, than its position becomes important, because any obstacle 
``between'' this point and the source will have a dominant impact in comparison to the other 
points.
In this probably a better device would be needed than one based on the distance from the 
source only. Similar
types of problems can be imagined in case of a football (i.e., soccer) game where one would expect
the defenders to be in general further from the opponents goal and the 
striker's position to be much more important.  


\end{document}